\newtheorem{thm}{Theorem}
\newtheorem{cor}{Corollary}
\newtheorem{pro}{Proposition}
\theoremstyle{definition}
\newtheorem*{rem}{Remark}
\renewenvironment{proof}[1][\proofname] {\par\pushQED{\qed}\normalfont\topsep6\p@\@plus6\p@\relax\trivlist\item[\hskip\labelsep\bfseries#1\@addpunct{.}]\ignorespaces}{\popQED\endtrivlist\@endpefalse}
\def\[#1\]{\begin{align*}#1\end{align*}}
\newcommand{\R}{\mathbb{R}}
\newcommand{\N}{\mathbb{N}}
\newcommand{\ceq}{\coloneqq}
\newcommand{\eqc}{\eqqcolon}
\newcommand{\eps}{\varepsilon}
\def\B{\mathscr{B}}
\newcommand{\ltx}[1]{\ \text{#1}}
\newcommand{\df}{\mathop{}\!\mathrm{d}}
\newcommand{\M}
{\mathbb{M}}
\newcommand{\st}{\,\vert\,}
\newcommand{\pfqed}{\tag*{\qedhere}}
\begin{document}
\title{On Almost Uniform Continuity of Borel Functions on Polish Metric Spaces}
\author{Yu-Lin Chou\thanks{Yu-Lin Chou, Institute of Statistics, National Tsing Hua University, Hsinchu 30013, Taiwan,  R.O.C.; Email: \protect\url{y.l.chou@gapp.nthu.edu.tw}.}}
\date{}
\maketitle

\begin{abstract}
We show that, on any given finite Borel measure space with the ambient
space being a Polish metric space, every Borel real-valued function
is almost a bounded, uniformly continuous function in the sense that
for every $\eps > 0$ there is some bounded, uniformly continuous
function such that the set of points at which they would not agree
has measure $< \eps$. In particular, this result complements the
known result of almost uniform continuity of Borel real-valued functions
on a finite Radon measure space whose ambient space is a locally compact
metric space. As direct applications in connection with some common
modes of convergence, under our assumptions it holds that i) for every
Borel real-valued function there is some sequence of bounded, uniformly
continuous functions converging in measure to it, and ii) for every
 bounded, Borel real-valued function there is some sequence of bounded, uniformly
continuous functions converging in $L^{p}$ to it.\\

{\noindent {\bf Keywords:}} almost uniform continuity; Borel functions; convergence;  extension theorems; finite Borel measures; Lusin's theorem; Lusin topology; Polish metric spaces\\
{\noindent {\bf MSC 2020:}} 30L99; 60A10; 26A15; 28A99 
\end{abstract}

\section{Introduction}

Let $\Omega$ be a metric space; let $\M$ be a finite Borel measure
over $\Omega$. It follows from a well-known version of Lusin's Theorem
(e.g. Theorem 2.24 in Rudin \cite{r}) that, if $\Omega$ is locally
compact, if $\M$ is Radon, and if $f: \Omega \to \R$ is Borel-measurable,
then for every $\eps > 0$ there is some bounded, uniformly continuous
function $\Omega \to \R$ such that the set of points at which they
possibly disagree has measure $< \eps$. For our purposes, we refer
to a Borel function $\Omega \to \R$ satisfying the conclusion of
the above proposition as ($\M$-)\textit{almost uniformly continuous},
where $\Omega$ is simply a metric space and $\M$, with respect to
which the meaning of ``almost'' is clearly assigned, is simply a finite
Borel measure over $\Omega$. Thus boundedness is also a requirement
of almost uniform continuity.

Now, depending on the purposes, local compactness is not always a
helping property. For instance, there are metric spaces that are interesting
and important in analysis but that are not locally compact; it can
be shown that the space $\R^{\N}$ of real sequences, equipped with
the product metric (in terms of summation) of the (equivalent) Euclidean
metric $(x,y) \mapsto 1 \wedge |x-y|$ of $\R$, is not locally compact,
and that the classical Wiener space $C([0,1], \R)$ is not locally
compact with respect to the uniform metric; no closed ball in either
space is compact. On the other hand, these spaces are indeed complete
and separable with respect to the respective metrics, i.e. they are
Polish metric spaces. Given the importance of Polish metric spaces,
in particular of compact metric spaces, in analysis (and geometry),
it would be desirable to have an almost-uniform-continuity result,
serving Polish metric spaces and requiring no local compactness, for
Borel real-valued functions, such that the assumption on the underlying
finite measure is hopefully mild. 

It turns out that it suffices for the underlying (finite) measure
to be Borel. In this short article, we obtain the desired result that
\textit{every Borel real-valued function on a Polish metric space is almost uniformly continuous with respect to a pre-specified finite Borel measure}.
Indeed, given an arbitrary metric space and a finite Borel measure
over it, one can already assert that every Borel real-valued function
is almost a continuous function in the following sense: It follows
from a proposition in Federer \cite{f} (Section 2.3.6) that every
Borel real-valued function on the metric space has the property that
for every $\eps > 0$ there is some continuous function such that
the set of points where they possibly disagree has measure $< \eps$.
The proposition is obtained directly from the Federer's version\footnote{Although the Federer's proof is for outer measures in the ``usual'' sense, it happens to apply to Borel measures in the ``usual'' sense. A measure in the geometric-measure-theoretic sense is precisely an outer measure in the ``usual'' sense.}
(Section 2.3.5, the proof being applicable) of Lusin's theorem and
the Tietze's extension theorem. However, without further assumptions,
the approximating continuous functions need not be bounded or uniformly
continuous. 

The main message of the proof of our main result is, rather than claiming
a ``significant'' advancement in the classical topics, that one might
as well obtain the readily applicable result —{\,\,}almost uniform
continuity of Borel real-valued functions on a Polish metric space
taken as a finite Borel measure space —{\,\,}simply with new twists
of known facts. Some interesting by-products of the main result, under
the same assumptions, are i) every Borel real-valued function is the
(essential) convergence-in-measure limit of bounded, uniformly continuous
functions, and ii) every  bounded, Borel real-valued function
is the (essential) $L^{p}$-limit of bounded, uniformly continuous
functions. After introducing the necessary preliminaries, we proceed
to the proofs.

\section{Preliminaries}

By a \textit{Borel} measure over a metric space $\Omega$ we mean
a measure defined on the Borel sigma-algebra $\B_{\Omega}$
of $\Omega$ generated by the topology of $\Omega$ induced by the
given metric of $\Omega$.

We will stick to the standard measure-theoretic definitions of outer regularity
and inner regularity of measures. For our purposes, it would be convenient
to introduce another kind of regularity associated with measures.
If $\M$ is a finite Borel measure over a metric space $\Omega$, the measure
$\M$ is called \textit{co-outer regular}\footnote{In the literature of probability theory, co-outer regularity is sometimes also termed inner regularity, and is associated with Borel probability measures over a metric space. Since our arguments will involve both inner regularity in the standard measure-theoretic sense and co-outer regularity, and since every Borel probability measure over a metric space is a finite Borel measure, we choose to assign a new name to the property.}
if and only if $\M(B) = \sup\{ \M(F) \mid F \subset B \ltx{is closed}\}$
for every $B \in \B_{\Omega}$. The terminology reflects the elementary
fact that a closed set is the complement of some open set. Since $\sup \{ \M(K) \mid K \subset B \ltx{is compact}\} \leq \sup \{ \M(F) \mid F \subset B \ltx{is closed}\}$
whenever $B \in \B_{\Omega}$, the inner regularity (resp. co-outer
regularity) of a finite Borel measure need not imply co-outer regularity
(resp. inner regularity).

The topology of a metric space always refers to the topology induced
by the given metric. By a \textit{Polish metric space} we mean a
metric space that is complete and separable with respect to the given
metric. If $\Omega$ is a metric space, and if $\M$ is a finite
Borel measure over $\Omega$, we will denote by $L^{0}(\M)$ the collection
of all Borel functions $\Omega \to \R$, 
by $L^{0}_{b}(\M)$ the collection of all bounded, Borel functions $\Omega \to \R$,
by $C(\Omega)$ the collection
of all continuous functions $\Omega \to \R$, 
by $C_{u}(\Omega)$
the collection of all uniformly continuous functions $\Omega \to \R$,
and by $C_{b,u}(\Omega)$ the collection of all bounded, uniformly
continuous functions $\Omega \to \R$. 

Throughout, we will in general write a set $\{ x \in \Omega \mid P(x) \ltx{holds} \}$
obtained by specification simply as $\{ P \}$ whenever no confusion
is possible. Thus, if $f,g: \Omega \to \R$, then $\{ x \in \Omega \mid f(x) \neq g(x) \} = \{ f \neq g \}$.
Moreover, when written in juxtaposition with a measure, the set $\{ f \neq g \}$
will also be written simply as $(f \neq g)$. For example, we have
$\M(\{ f \neq g \}) = \M(f \neq g)$. This notation is common in probability
theory.

If $A_{1}, A_{2}$ are subsets of a topological space, then $A_{1}$
is said to be \textit{relatively dense in} $A_{2}$ if and only if
the closure of $A_{1}$ includes $A_{2}$. If $A_{2}$ coincides with
the given ambient space, then the relative denseness of $A_{1}$ in
$A_{2}$ is simply the denseness of $A_{1}$ in $A_{2}$ in the usual
sense.

We will argue in terms of the language of topology, which may be more
conceptually ``compact”. If $\Omega$ is a metric space,
and if $\M$ is a finite Borel measure over $\Omega$, let \[
V(f,\eps) \ceq \{ g \in L^{0}(\M) \st \M(f \neq g) < \eps \}
\]for every $f \in L^{0}(\M)$ and every $\eps > 0$. 
We have $f \in V(f,\eps)$ for every $f \in L^{0}(\M)$ and every $\eps > 0$; moreover, the triangle inequality ensures that the intersection of any two $V(f,\eps)$ is some union of $V(f,\eps)$. Topologize $L^{0}(\M)$ in terms of the topology
generated by $\{ V(f,\eps) \}_{f,\eps}$.
Considering the conclusion of the aforementioned Rudin's version\footnote{When we say ``Rudin's version'' or ``Federer's version'', we merely make a nominal distinction, which facilitates the communication.}
of Lusin's theorem, we will refer to the topology of $L^{0}(\M)$
thus obtained as a \textit{Lusin topology} (so $\{ V(f,\eps) \}_{f,\eps}$
may naturally be called a \textit{Lusin basis}), for ease of reference.
Accordingly, the topological properties of sets such as closedness
with respect to the Lusin topology of $L^{0}(\M)$ will be referred
to in terms of the modifier ``Lusin''; for instance, a closed subset
of $L^{0}(\M)$ with respect to the corresponding Lusin topology will
also be said to be \textit{Lusin-closed}. Now the almost uniform
continuity of elements of $L^{0}(\M)$ may be translated as follows:
An element $f$ of $L^{0}(\M)$ is almost uniformly continuous if
and only if $f$ lies in the Lusin-closure of $C_{b,u}(\Omega)$.
Moreover, the almost uniform continuity of elements of $L^{0}(\M)$,
which is the conclusion of our main result, may now be stated neatly
as: the space $C_{b,u}(\Omega)$ is Lusin-dense in $L^{0}(\M)$.

\section{Results}

We should like to prove our main result:

\begin{thm}

If $\Omega$ is a Polish metric space, and if $\M$ is a finite Borel
measure over $\Omega$, then $C_{b,u}(\Omega)$ is Lusin-dense in
$L^{0}(\M)$.

\end{thm}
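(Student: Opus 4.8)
The plan is to unwind the topological formulation and prove the equivalent pointwise statement: for every $f \in L^{0}(\M)$ and every $\eps > 0$ there is some $g \in C_{b,u}(\Omega)$ with $\M(f \neq g) < \eps$. Since every basic Lusin neighbourhood of $f$ is of the form $V(f,\eps)$, this says exactly that $V(f,\eps) \cap C_{b,u}(\Omega) \neq \varnothing$ for all $\eps$, i.e. that $f$ lies in the Lusin-closure of $C_{b,u}(\Omega)$; as $f$ is arbitrary, this is the asserted Lusin-density. So the whole problem reduces to manufacturing, for given $f$ and $\eps$, a single bounded uniformly continuous function agreeing with $f$ off a set of measure $< \eps$.

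The key is to exploit the Polish hypothesis through tightness so as to replace ``continuous'' by ``uniformly continuous'' at no cost. First I would invoke that a finite Borel measure on a Polish metric space is inner regular with respect to compact sets, i.e. $\M(\Omega) = \sup\{ \M(K) \mid K \subseteq \Omega \ltx{is compact} \}$; this produces a compact $K$ with $\M(\Omega \setminus K) < \eps/2$. Independently, Lusin's theorem for finite Borel measures on metric spaces (the closed-set form underlying the Federer and Rudin versions already cited) yields a closed $F$ with $\M(\Omega \setminus F) < \eps/2$ and $f|_{F}$ continuous. Setting $E \ceq K \cap F$, the set $E$ is compact, $\M(\Omega \setminus E) \leq \M(\Omega \setminus K) + \M(\Omega \setminus F) < \eps$, and $f|_{E}$ is continuous on a compact metric space, hence automatically both bounded and uniformly continuous. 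This compactness-to-uniform-continuity upgrade is precisely the ingredient that local compactness supplies in the classical statement and that tightness supplies here without any local compactness of $\Omega$.

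The remaining and, I expect, genuinely delicate step is to extend the bounded, uniformly continuous function $f|_{E}$ to some $g \in C_{b,u}(\Omega)$ on all of $\Omega$, since Tietze's theorem only preserves continuity. Here I would appeal to the extension theorem for uniformly continuous functions: letting $\omega$ be a modulus of continuity of $f|_{E}$ and $\hat\omega$ a nondecreasing concave majorant of $\omega$ with $\hat\omega(0+) = 0$, the McShane-type inf-convolution $g_{0}(x) \ceq \inf_{a \in E} \bigl[ f(a) + \hat\omega(d(x,a)) \bigr]$ defines a uniformly continuous function on $\Omega$ whose restriction to $E$ equals $f$ (the concavity forcing the controlled modulus, and the infimum being attained by $a = x$ when $x \in E$). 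Composing $g_{0}$ with the $1$-Lipschitz truncation at $\pm \sup_{E}|f|$ leaves it uniformly continuous while rendering it bounded, so the truncated function $g$ lies in $C_{b,u}(\Omega)$ and still agrees with $f$ throughout $E$.

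Finally, since $g = f$ on $E$, we have $\{ f \neq g \} \subseteq \Omega \setminus E$, whence $\M(f \neq g) \leq \M(\Omega \setminus E) < \eps$, which is what was required. The only non-routine component is the bounded uniformly continuous extension of the third paragraph; the rest is a clean assembly of tightness, Lusin's theorem, and the observation that continuity on a compact set is already uniform continuity.
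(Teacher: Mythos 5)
Your proposal is correct and follows essentially the same route as the paper's proof: the closed-set form of Lusin's theorem for finite Borel measures on metric spaces, tightness on Polish spaces to obtain a compact set on which $f$ is automatically bounded and uniformly continuous, a McShane-type bounded uniformly continuous extension, and the final estimate $\M(f \neq g) \leq \M(\Omega \setminus E) < \eps$. The only differences are cosmetic: you intersect the tightness compact with the Lusin closed set directly, whereas the paper first upgrades co-outer regularity plus tightness to full inner regularity (using the same $K \cap F$ intersection trick internally) and then extracts a compact subset of $F_{\eps}$, and you reprove McShane's extension theorem via inf-convolution with a concave modulus majorant and truncation where the paper simply cites it.
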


\begin{proof}

We have $C_{b,u}(\Omega) \subset L^{0}(\M)$; so the Lusin-closure
of $C_{b,u}(\Omega)$ is included in $L^{0}(\M)$. It then suffices
to show that every element of $L^{0}(\M)$ lies in the Lusin-closure
of $C_{b,u}(\Omega)$. 

Let $f \in L^{0}(\M)$; let $\eps > 0$. Since every finite Borel
measure over a metric space is both outer regular and co-outer regular,
which is known and may be obtained neatly from an immediate, apparent
generalization of the simple proof of Theorem 1.1. in Billingsley
\cite{b}, adapting the proof of the Federer's version of Lusin's
theorem for arbitrary metric spaces (i.e. Section 2.3.5, Federer \cite{f})
in the apparent way ensures the existence of some closed subset $F_{\eps}$
of $\Omega$ such that $\M(F_{\eps}^{c}) < \eps/2$ and $f|_{F_{\eps}}$
is continuous. 

On the other hand, we claim that $\M$ is in fact also inner regular.
Indeed, for $\M$ to be inner regular it is sufficient for $\M$ to
be inner regular at $\Omega$, i.e. for it to hold that $\M(\Omega) = \sup \{ \M(K) \mid K \subset \Omega \ltx{is compact} \}$.
To see this, fix any $B \in \B_{\Omega}$. Then, since $\M$ is co-outer
regular, for every $\delta > 0$ there is some closed $F \subset B$
such that $\M(F) > \M(B) - \delta/2$; moreover, there is some compact
$K \subset \Omega$ such that $\M(K) > \M(\Omega) - \delta/2$. Then
$K \cap F$ is compact and included in $B$, and \[
\M(K \cap F) 
&= \M(F) - \M(F\setminus K)\\ 
&> \M(B) - \delta/2 - \M(K^{c})\\
&> \M(B) - \delta;  
\]the inner regularity of $\M$ follows. But $\M$ is indeed inner regular
at $\Omega$; this follows from a direct apparent application of the
simple proof of Theorem 1.3 in Billingsley \cite{b}. We have proved
the claim of the inner regularity of $\M$.

Now there is some compact $K \subset F_{\eps}$ such that $\M(F_{\eps}\setminus K) < \eps/2$,
and so $f|_{K}$ is bounded and uniformly continuous. Then the McShane's
extension theorem (Corollary 2, McShane \cite{m}) asserts the existence
of some $g \in C_{b,u}(\Omega)$ such that $g|_{K} = f|_{K}$ (and $g$ preserves the bounds).  Since
$\{ f \neq g \} \subset K^{c}$, we have \[
\M(f \neq g)
&\leq \M(K^{c})\\
&\leq \M(F_{\eps}\setminus K) + \M(F_{\eps}^{c})\\
&< \eps;
\]so $f$ lies in the Lusin-closure of $C_{b,u}(\Omega)$. \end{proof}

\begin{rem}

As some branches of probability theory admitting extensive literature
such as weak convergence theory (e.g. Billingsley \cite{b}) or optimal
transport (e.g. Villani \cite{v}) serve as a natural, significant
context directly deeply connected with Polish metric spaces taken
as a finite Borel measure space, we would stress that the applicability
of Theorem 1 covers the Borel probability spaces whose ambient space
is a Polish metric space, although this remark is technically apparent.
\qed

\end{rem}

There is an interesting application of Theorem 1 for convergence in
measure:

\begin{thm}

If $\Omega$ is a Polish metric space, and if $\M$ is a finite Borel
measure over $\Omega$, then $C_{b,u}(\Omega)$ is dense in $L^{0}(\M)$
with respect to the convergence-in-measure topology of $L^{0}(\M)$.

\end{thm}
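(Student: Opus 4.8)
The plan is to derive Theorem 2 from Theorem 1 by comparing the two topologies carried by $L^{0}(\M)$. The single piece of measure-theoretic content I would isolate at the outset is the elementary set inclusion $\{ |f-g| > \delta \} \subset \{ f \neq g \}$, valid for all $f,g \in L^{0}(\M)$ and every $\delta > 0$, which upon applying $\M$ yields $\M(|f-g| > \delta) \leq \M(f \neq g)$. I would first record the standard fact that, because $\M$ is finite, convergence in measure on $L^{0}(\M)$ is metrizable and that a neighbourhood base at a point $f$ is furnished by the sets $\{ g \in L^{0}(\M) : \M(|f-g| > \delta) < \eta \}$ as $\delta,\eta$ range over the positive reals; this is what the phrase ``convergence-in-measure topology'' refers to.

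Next I would show that the Lusin topology is finer than the convergence-in-measure topology. Given any basic convergence-in-measure neighbourhood $\{ g : \M(|f-g| > \delta) < \eta \}$ of $f$, the displayed inequality gives at once the inclusion $V(f,\eta) \subset \{ g : \M(|f-g| > \delta) < \eta \}$, since $\M(f \neq g) < \eta$ forces $\M(|f-g| > \delta) \leq \M(f \neq g) < \eta$. Thus every convergence-in-measure neighbourhood of $f$ contains a Lusin neighbourhood of $f$; as this holds at each point, every convergence-in-measure-open set is Lusin-open, i.e. the Lusin topology refines the convergence-in-measure topology.

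Finally I would invoke the general principle that a finer topology has smaller closures, so the Lusin-closure of $C_{b,u}(\Omega)$ is contained in its convergence-in-measure closure. By Theorem 1 the Lusin-closure is all of $L^{0}(\M)$, whence so is the convergence-in-measure closure, which is precisely the assertion that $C_{b,u}(\Omega)$ is dense in $L^{0}(\M)$ for the convergence-in-measure topology. I would also present the equivalent concrete mechanism: for fixed $f \in L^{0}(\M)$, Theorem 1 supplies for each $n$ some $g_{n} \in C_{b,u}(\Omega)$ with $\M(f \neq g_{n}) < 1/n$, and then $\M(|f-g_{n}| > \delta) \leq \M(f \neq g_{n}) \to 0$ for every $\delta > 0$, so $g_{n} \to f$ in measure.

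I do not expect a genuine obstacle here, as no new regularity or extension input beyond Theorem 1 is required; the only point demanding care is the bookkeeping of the comparison, namely getting the direction of the refinement right and remembering that it is the finer topology (the Lusin topology) that produces the smaller closure, so that Lusin-density transfers to convergence-in-measure density rather than the reverse.
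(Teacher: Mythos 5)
Your proof is correct, but it is organized differently from the paper's. The paper's proof works directly with its chosen basic sets, namely $\{ g \in L^{0}(\M) \st \inf\{ r \st \M(|f-g| \geq r) < r \} < \eps \}$: given $f$ and $\eps > 0$, it invokes Theorem 1 to get $g \in C_{b,u}(\Omega)$ with $\M(|f-g| > 0) < \eps/2$, notes $\M(|f-g| \geq \eps/2) \leq \M(|f-g| > 0) < \eps/2$, and concludes at once that $g$ lies in the basic neighbourhood of $f$ of radius $\eps$ --- a two-line direct verification with no topology comparison at all. You instead prove that the Lusin topology refines the convergence-in-measure topology and then transfer density via the general fact that a finer topology has smaller closures; interestingly, your refinement claim is precisely the content of the paper's separate ``side observation'' proposition (stated right after this theorem, for arbitrary metric spaces), which the paper proves via the triangle inequality for the pseudometric $d_{c}$ but deliberately does not use in proving the theorem itself. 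Both arguments hinge on the same elementary inequality $\M(|f-g| \geq \delta) \leq \M(f \neq g)$ plus Theorem 1, so the measure-theoretic content is identical; what your route buys is a transparent logical statement (Lusin-density implies convergence-in-measure density, for any subset of $L^{0}(\M)$, over any metric space), at the cost of relying on the standard but unproved equivalence between your neighbourhood base $\{ g \st \M(|f-g| > \delta) < \eta \}$ and the paper's Ky-Fan-type base --- an equivalence the paper sidesteps by computing against its own basic sets directly. Your concluding ``concrete mechanism'' with $\M(f \neq g_{n}) < 1/n$ is essentially the paper's argument in sequential form, so nothing is missing; just be aware that your step from ``every basic convergence-in-measure neighbourhood of $f$ contains a Lusin neighbourhood of $f$'' to ``every convergence-in-measure-open set is Lusin-open'' silently uses that your basic sets form a neighbourhood base at \emph{each} of their points (the pseudometric ball property), which is exactly what the paper's side proposition establishes.
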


\begin{proof}

In accordance with the topological flavor, we topologize $L^{0}(\M)$
in terms of the topology generated by the subsets \[
\{ g \in L^{0}(\M) \st \inf \{r \in [0, +\infty] \st \M(|f - g| \geq r) < r \} < \eps \} 
\]where $f \in L^{0}(\M)$ and $\eps > 0$.
It is well-known (and readily seen) that a sequence
in $L^{0}(\M)$ converges in measure in $L^{0}(\M)$ if and only if
it converges with respect to this topology in $L^{0}(\M)$. Call the
topology the convergence-in-measure topology of $L^{0}(\M)$.

Let $f \in L^{0}(\M)$. If $\eps > 0$, then there is by Theorem
1 some $g \in C_{b,u}(\Omega)$ such that $\M(|f-g| > 0) < \eps/2$,
and so $\M(|f-g| \geq \eps/2) \leq \M(|f-g| > 0) < \eps/2$. This
implies that $g$ lies in the basic (convergence-in-measure-)neighborhood
of $f$ with radius $\eps$. \end{proof}

We have,
as a side observation   potentially of interest,
more information on the relations between the two topologies of $L^{0}(\M)$:
\begin{pro}
If $\Omega$ is a metric space, and if $\M$ is a finite Borel measure over $\Omega$, then every element of the convergence-in-measure topology of    $L^{0}(\M)$ is some union of elements of the Lusin topology of $L^{0}(\M)$.
\end{pro}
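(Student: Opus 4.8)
The plan is to show that the convergence-in-measure topology is coarser than the Lusin topology; the assertion to be proved is exactly that every convergence-in-measure-open set is a union of Lusin-basic sets $V(f,\eps)$. Since an arbitrary open set of any topology is a union of its generating basic sets, and a union of unions is again a union, it suffices to treat a single generating set of the convergence-in-measure topology. Writing, for $f \in L^{0}(\M)$ and $\eps > 0$,
\[
W(f,\eps) \ceq \{ g \in L^{0}(\M) \st \inf\{ r \in [0,+\infty] \st \M(|f-g| \geq r) < r \} < \eps \},
\]
so that $\{ W(f,\eps) \}_{f,\eps}$ generates the convergence-in-measure topology, I would show that each $W(f,\eps)$ is Lusin-open by exhibiting, around every one of its points, a Lusin-basic set contained in it.

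The one computational input is an elementary set inclusion. For any $g, h \in L^{0}(\M)$ and any $r > 0$, on the set $\{ g = h \}$ one has $|f - h| = |f - g|$, whence $\{ |f-h| \geq r \} \subseteq \{ |f-g| \geq r \} \cup \{ g \neq h \}$; finite subadditivity then yields $\M(|f - h| \geq r) \leq \M(|f-g| \geq r) + \M(g \neq h)$. This is the bridge that lets a small $\M(g \neq h)$ — i.e. closeness inside a Lusin-basic set — control the Ky-Fan-type functional defining $W(f,\eps)$.

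Now fix $g \in W(f,\eps)$. By the very meaning of the infimum being $< \eps$, there is a genuine witness $r_{0} \in [0,\eps)$ with $\M(|f-g| \geq r_{0}) < r_{0}$. I would then pick any $\delta \in (0, \eps - r_{0})$ and claim $V(g,\delta) \subseteq W(f,\eps)$: for $h \in V(g,\delta)$, set $r^{*} \ceq r_{0} + \delta$, which satisfies $r_{0} \leq r^{*} < \eps$, and use monotonicity together with the inclusion above to get
\[
\M(|f - h| \geq r^{*}) \leq \M(|f-h| \geq r_{0}) \leq \M(|f-g| \geq r_{0}) + \M(g \neq h) < r_{0} + \delta = r^{*}.
\]
Thus $r^{*}$ lies in the set whose infimum defines membership in $W(f,\eps)$, so that infimum is $\leq r^{*} < \eps$ and hence $h \in W(f,\eps)$. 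Consequently $W(f,\eps) = \bigcup_{g \in W(f,\eps)} V(g,\delta_{g})$ is a union of Lusin-basic sets, and the proposition follows.

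I expect the only delicate point to be the bookkeeping with the strict inequalities in the definition of the functional: one must extract from $g \in W(f,\eps)$ an actual element $r_{0} < \eps$ of the defining set (not merely the fact that its infimum is $< \eps$), and then reserve room $\delta < \eps - r_{0}$ so that the slightly enlarged radius $r^{*} = r_{0} + \delta$ stays below $\eps$ while still absorbing the measure $\M(g \neq h)$ of the disagreement set. Everything else reduces to monotonicity and finite subadditivity of $\M$, and no regularity or topological hypothesis on $\Omega$ beyond its being a metric space is needed — which is consistent with the proposition being stated for an arbitrary metric space.
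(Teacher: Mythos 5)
Your proposal is correct and follows essentially the same route as the paper: around each point $g$ of a generating convergence-in-measure set of radius $\eps$ you place a Lusin-basic set $V(g,\delta)$ with $\delta$ strictly smaller than the remaining slack, which is exactly the ball-within-ball argument the paper runs with $\delta = \frac{\eps - d_{c}(f,g)}{2}$, where $d_{c}$ denotes the Ky Fan functional defining the basic open sets. The only difference is presentational: where the paper simply cites that $d_{c}$ is a pseudometric (triangle inequality) and implicitly uses that $\M(g \neq h) < \delta$ forces $d_{c}(g,h) \leq \delta$, you verify the needed estimate by hand --- extracting a witness $r_{0} < \eps$ from the infimum and applying the inclusion $\{ |f-h| \geq r_{0} \} \subset \{ |f-g| \geq r_{0} \} \cup \{ g \neq h \}$ with subadditivity --- thereby making explicit precisely the bookkeeping the paper leaves implicit.
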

\begin{proof}
For convenience,
denote by $d_{c}$ the  (pseudo-)metric defining a basic open set for the convergence-in-measure topology. 

Let $f \in L^{0}(\M)$; let $\eps > 0$; let $d_{c}(f,g) < \eps$. If $h$ is contained in the basic open set $V(g, \frac{\eps - d_{c}(f,g)}{2})$ of the Lusin topology,
then $d_{c}(h,g) \leq \frac{\eps - d_{c}(f,g)}{2} < \eps - d_{c}(f,g)$;
and so
\[
d_{c}(h,f)
\leq d_{c}(h,g) + d_{c}(g,f) 
< \eps. \pfqed
\]
\end{proof}

Theorem 1 may also be applied to obtain an interesting result regarding
$L^{p}$-convergence:

\begin{thm}

If $\Omega$ is a Polish metric space, and if $\M$ is a finite Borel
measure over $\Omega$, then $C_{b,u}(\Omega)$ is relatively $L^{p}$-dense
in $L^{0}_{b}(\M)$ for every $1 \leq p < +\infty$.

\end{thm}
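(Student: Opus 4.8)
The plan is to deduce this directly from Theorem 1, exploiting the bound-preservation of the McShane extension that already featured in its proof. By the conventions fixed earlier, the assertion that $C_{b,u}(\Omega)$ is relatively $L^p$-dense in $L^0_b(\M)$ means precisely that the $L^p$-closure of $C_{b,u}(\Omega)$ includes $L^0_b(\M)$; equivalently, every bounded Borel function is an $L^p$-limit of bounded, uniformly continuous functions. Since $\M$ is finite, both $C_{b,u}(\Omega)$ and $L^0_b(\M)$ sit inside $L^p(\M)$, so the statement is meaningful.

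First I would fix $f \in L^0_b(\M)$ and a bound $M \geq 0$ with $|f| \leq M$; the case $M = 0$ is trivial (take the zero function), so assume $M > 0$. Fix also $p \in [1, +\infty)$ and a tolerance $\eta > 0$. Revisiting the proof of Theorem 1, the approximant $g \in C_{b,u}(\Omega)$ is obtained from McShane's extension theorem applied to $f|_K$, and it was noted there that $g$ preserves the bounds; consequently $|g| \leq \sup_K|f| \leq M$. Hence $|f - g| \leq 2M$ on all of $\Omega$, while $f - g$ vanishes off $\{f \neq g\}$.

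This yields the key estimate
\[
\int_\Omega |f - g|^p \df\M = \int_{\{f \neq g\}} |f - g|^p \df\M \leq (2M)^p \M(f \neq g).
\]
Since Theorem 1 produces $g$ with $\M(f \neq g)$ as small as we please, I would simply take it below $(\eta/(2M))^p$, which forces $\|f - g\|_p < \eta$. As $f$ and $\eta$ are arbitrary, every element of $L^0_b(\M)$ lies in the $L^p$-closure of $C_{b,u}(\Omega)$, which is the desired relative denseness.

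The only step requiring genuine care is the bound-preservation. Smallness of $\M(f \neq g)$ alone cannot control $\|f - g\|_p$, since a priori $g$ might be arbitrarily large on the (small) disagreement set. The argument closes precisely because the McShane extension keeps $g$ within the range of $f$, so that the uniform bound $2M$ converts the small measure of $\{f \neq g\}$ into a small $L^p$-error. No regularity of $\M$ beyond what Theorem 1 already uses is needed.
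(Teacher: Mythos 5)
Your proposal is correct and takes essentially the same route as the paper: the paper likewise fixes a bound $T$ of $f$, invokes Theorem 1 together with the bound preservation of the McShane extension to get $g \in C_{b,u}(\Omega)$ with the same bound and $\M(f \neq g) < (\eps/T)^p$, and then estimates the integral over the disagreement set. The only cosmetic difference is that you use the pointwise bound $|f-g| \leq 2M$ where the paper cites Minkowski's inequality (and your explicit factor $2M$ is in fact the more careful bookkeeping, since the paper's displayed constant omits the factor $2$ --- harmlessly, as $\eps$ is arbitrary).
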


\begin{proof}

Let $1 \leq p < +\infty$. Since $\M$ is by assumption finite, we
have $C_{b,u}(\Omega) \subset L^{0}_{b}(\M) \subset L^{p}(\M)$.
The proof is complete if the $L^{p}$-closure of $C_{b,u}(\Omega)$
includes $L^{0}_{b}(\M)$. 

Let $f \in L^{0}_{b}(\M)$; let $T > 0$ be a bound of $f$.  
We have $f \in L^{0}(\M)$ by definition, and,
given any $\eps > 0$, there is by Theorem 1 some $g \in C_{b,u}(\Omega)$
with $T$ being a bound  such that \[
\M(|f-g| > 0) < \bigg( \frac{\eps}{T} \bigg)^{p} \eqc \eta.
\]Then Minkowski inequality implies\[
\bigg( \int_{\Omega} |f-g|^{p} \df \M \bigg)^{1/p}
&= \bigg( \int_{\{ |f-g| > 0 \}}|f-g|^{p} \df \M \bigg)^{1/p}\\
&< T\eta^{1/p}\\
& = \eps. \pfqed
\] 

\end{proof}

Applying the proof ideas of Theorems 2, 3 and using the Rudin's version
of Lusin's theorem together give 

\begin{pro}

If $\Omega$ is a locally compact metric space, and if $\M$ is a
finite Radon measure over $\Omega$, then i) $C_{b,u}(\Omega)$ is
dense in $L^{0}(\M)$ with respect to the convergence-in-measure topology
of $L^{0}(\M)$, and ii) $C_{b,u}(\Omega)$ is relatively $L^{p}$-dense
in $L^{0}_{b}(\M)$ for every $1 \leq p < +\infty$.\qed

\end{pro}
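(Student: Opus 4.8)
The plan is to run the proofs of Theorems 2 and 3 essentially unchanged, replacing the single appeal to Theorem 1 by the Rudin's version of Lusin's theorem (Theorem 2.24 in Rudin \cite{r}), which is exactly the tool made available by the hypotheses of local compactness and the Radon property. The only genuinely new point to settle is that the approximant furnished by that theorem can be taken in $C_{b,u}(\Omega)$, whereas Rudin's conclusion produces a priori only a continuous function with compact support.

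First I would record the elementary fact that, on a locally compact metric space $\Omega$, every continuous function with compact support lies in $C_{b,u}(\Omega)$. Such a function $g$ is trivially bounded, and, writing $K$ for its support, local compactness furnishes a compact set $L$ whose interior contains $K$; then $g$ is uniformly continuous on the compact set $L$, while $g$ vanishes at every point whose distance to $K$ exceeds the positive number $\mathrm{dist}(K, \Omega \setminus \mathrm{int}\,L)$. Taking, for a given tolerance, the smaller of the modulus of continuity on $L$ and half this distance yields a single modulus witnessing $g \in C_{b,u}(\Omega)$. Since $\M$ is finite, I may then apply the Rudin's version of Lusin's theorem with the whole finite-measure space $\Omega$ in the role of the set off which the target function vanishes, obtaining for each Borel $f$ and each $\eps > 0$ some $g \in C_{b,u}(\Omega)$ with $\M(|f - g| > 0)$ as small as desired, and with the bound-preserving clause $\sup|g| \le \sup|f|$ available when needed.

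With this replacement in hand, part i) is the proof of Theorem 2 verbatim: given $f \in L^{0}(\M)$ and $\eps > 0$, choose $g \in C_{b,u}(\Omega)$ with $\M(|f-g| > 0) < \eps/2$, so that $\M(|f-g| \ge \eps/2) < \eps/2$ and $g$ falls in the basic convergence-in-measure neighbourhood of $f$ of radius $\eps$. Part ii) is the proof of Theorem 3 verbatim: given $f \in L^{0}_{b}(\M)$ with bound $T$ and $\eps > 0$, the bound-preserving clause lets me take $g \in C_{b,u}(\Omega)$ with bound $T$ and $\M(|f-g| > 0)$ small enough that the same Minkowski estimate forces the $L^{p}(\M)$-distance between $f$ and $g$ below $\eps$. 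The main (indeed the only) obstacle is thus the upgrade of Rudin's $C_{c}$-valued conclusion to a $C_{b,u}$-valued one; once that elementary observation is made, the statement is a pure transcription of the two preceding proofs, and it is precisely here that the classical locally-compact theory runs parallel to the Polish-space result of Theorem 1.
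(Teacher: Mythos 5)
Your proposal is correct and follows exactly the route the paper intends: Proposition 1 is stated with no written proof beyond the remark that one should ``apply the proof ideas of Theorems 2, 3'' with the Rudin's version of Lusin's theorem replacing Theorem 1, which is precisely what you do. Your explicit verification that on a locally compact metric space every compactly supported continuous function lies in $C_{b,u}(\Omega)$ (via a compact $L$ with $K \subset \mathrm{int}\, L$ and the positive distance $\mathrm{dist}(K, \Omega \setminus \mathrm{int}\, L)$), together with the bound-preserving clause $\sup|g| \leq \sup|f|$ needed for part ii), supplies the only details the paper leaves implicit.
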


\begin{rem}

Under the assumptions of Proposition 1, Proposition 1 contains, as
far as almost uniform continuity is concerned, information in addition
to the corollary to Theorem 2.24, i.e. the Rudin's version of Lusin's
theorem, in Rudin \cite{r}; in some directions, Proposition 1 contains
more information.\qed

\end{rem}

As a compact metric space is both locally compact and Polish, a generic
corollary certainly follows:

\begin{cor}

If $\Omega$ is a compact metric space, and if $\M$ is a finite Borel
measure over $\Omega$, then i) $C_{b,u}(\Omega)$ is Lusin-dense
in $L^{0}(\M)$, ii) $C_{b,u}(\Omega)$ is dense in $L^{0}(\M)$ with
respect to the convergence-in-measure topology of $L^{0}(\M)$, and
iii) $C_{b,u}(\Omega)$ is relatively $L^{p}$-dense in $L^{0}_{b}(\M)$
for every $1 \leq p < +\infty$ \qed

\end{cor}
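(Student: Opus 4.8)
The plan is to recognise the corollary as a direct specialisation of the results already proved, obtained by noting that a compact metric space satisfies the hypotheses of both strands of the development. First I would record two elementary structural facts. A compact metric space is complete and separable: completeness holds because a Cauchy sequence in a (sequentially) compact space has a convergent subsequence and therefore converges, while separability follows from total boundedness, which furnishes a countable dense set by taking, say, the union over $n$ of a finite $1/n$-net. Thus a compact metric space is Polish. It is also trivially locally compact, the whole space being a compact neighbourhood of each of its points.

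With these facts in hand, part i) is nothing but Theorem 1 applied to the Polish space $\Omega$ and the finite Borel measure $\M$. For parts ii) and iii) I have two equally short routes. The first is to apply Theorem 2 and Theorem 3 directly, since each requires only that $\Omega$ be Polish and $\M$ finite Borel. The second, which exploits local compactness, is to invoke Proposition 1; this needs $\M$ to be Radon, but that is already established inside the proof of Theorem 1, where a finite Borel measure on a Polish metric space is shown to be inner regular with respect to compact sets, and is (as recorded in the same proof) outer regular as well, hence Radon. Either route closes ii) and iii) in one stroke.

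I do not anticipate a genuine obstacle: the entire content is inherited from Theorems 1, 2 and 3 (or, for ii) and iii), equivalently from Proposition 1). The only points deserving a sentence are the verification that a compact metric space is simultaneously Polish and locally compact, and the observation that a finite Borel measure in this setting is automatically Radon — both of which are immediate or already in hand. The mild subtlety worth flagging, should one prefer the Proposition 1 route, is simply to make explicit that the Radon hypothesis there is met, rather than to take it as an extra assumption.
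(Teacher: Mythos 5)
Your proposal is correct and matches the paper's route exactly: the paper derives the corollary immediately from the observation that a compact metric space is both Polish and locally compact, so that i)--iii) follow from Theorems 1, 2, 3 (or, for ii) and iii), from Proposition 1). Your extra care in verifying that $\M$ is Radon for the Proposition 1 route is a sensible explicitness the paper leaves implicit, but it is the same argument.
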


\end{document}